\theoremstyle{plain}
\newtheorem{thrm}{Theorem}[section]
\newtheorem{lemma}[thrm]{Lemma}
\newtheorem{rmrk}[thrm]{Remark}
\newtheorem{dfn}[thrm]{Definition}
\begin{document}
\newcommand{\sn}{\mathbb{S}^{n-1}}
\newcommand{\SL}{\mathcal L^{1,p}( D)}
\newcommand{\Lp}{L^p( Dega)}
\newcommand{\CO}{C^\infty_0( \Omega)}
\newcommand{\Rn}{\mathbb R^n}
\newcommand{\Rm}{\mathbb R^m}
\newcommand{\R}{\mathbb R}
\newcommand{\Om}{\Omega}
\newcommand{\Hn}{\mathbb H^n}
\newcommand{\aB}{\alpha B}
\newcommand{\eps}{\ve}
\newcommand{\BVX}{BV_X(\Omega)}
\newcommand{\p}{\partial}
\newcommand{\IO}{\int_\Omega}
\newcommand{\bG}{\boldsymbol{G}}
\newcommand{\bg}{\mathfrak g}
\newcommand{\bz}{\mathfrak z}
\newcommand{\bv}{\mathfrak v}
\newcommand{\Bux}{\mbox{Box}}
\newcommand{\e}{\ve}
\newcommand{\X}{\mathcal X}
\newcommand{\Y}{\mathcal Y}
\newcommand{\W}{\mathcal W}
\newcommand{\la}{\lambda}
\newcommand{\vf}{\varphi}
\newcommand{\rhh}{|\nabla_H \rho|}
\newcommand{\Ba}{\mathcal{B}_\beta}
\newcommand{\Za}{Z_\beta}
\newcommand{\ra}{\rho_\beta}
\newcommand{\na}{\nabla_\beta}
\newcommand{\vt}{\vartheta}

\numberwithin{equation}{section}

\newcommand{\RN} {\mathbb{R}^N}
\newcommand{\Sob}{S^{1,p}(\Omega)}
\newcommand{\Dxk}{\frac{\partial}{\partial x_k}}
\newcommand{\Co}{C^\infty_0(\Omega)}
\newcommand{\Je}{J_\ve}
\newcommand{\beq}{\begin{equation}}
\newcommand{\bea}[1]{\begin{array}{#1} }
\newcommand{\eeq}{ \end{equation}}
\newcommand{\ea}{ \end{array}}
\newcommand{\eh}{\ve h}
\newcommand{\Dxi}{\frac{\partial}{\partial x_{i}}}
\newcommand{\Dyi}{\frac{\partial}{\partial y_{i}}}
\newcommand{\Dt}{\frac{\partial}{\partial t}}
\newcommand{\aBa}{(\alpha+1)B}
\newcommand{\GF}{\psi^{1+\frac{1}{2\alpha}}}
\newcommand{\GS}{\psi^{\frac12}}
\newcommand{\HFF}{\frac{\psi}{\rho}}
\newcommand{\HSS}{\frac{\psi}{\rho}}
\newcommand{\HFS}{\rho\psi^{\frac12-\frac{1}{2\alpha}}}
\newcommand{\HSF}{\frac{\psi^{\frac32+\frac{1}{2\alpha}}}{\rho}}
\newcommand{\AF}{\rho}
\newcommand{\AR}{\rho{\psi}^{\frac{1}{2}+\frac{1}{2\alpha}}}
\newcommand{\PF}{\alpha\frac{\psi}{|x|}}
\newcommand{\PS}{\alpha\frac{\psi}{\rho}}
\newcommand{\ds}{\displaystyle}
\newcommand{\Zt}{{\mathcal Z}^{t}}
\newcommand{\XPSI}{2\alpha\psi \begin{pmatrix} \frac{x}{\left< x \right>^2}\\ 0 \end{pmatrix} - 2\alpha\frac{{\psi}^2}{\rho^2}\begin{pmatrix} x \\ (\alpha +1)|x|^{-\alpha}y \end{pmatrix}}
\newcommand{\Z}{ \begin{pmatrix} x \\ (\alpha + 1)|x|^{-\alpha}y \end{pmatrix} }
\newcommand{\ZZ}{ \begin{pmatrix} xx^{t} & (\alpha + 1)|x|^{-\alpha}x y^{t}\\
     (\alpha + 1)|x|^{-\alpha}x^{t} y &   (\alpha + 1)^2  |x|^{-2\alpha}yy^{t}\end{pmatrix}}
\newcommand{\norm}[1]{\lVert#1 \rVert}
\newcommand{\ve}{\varepsilon}
\newcommand{\D}{\operatorname{div}}

\title[backward uniqueness etc.]{ Strong Backward uniqueness for sublinear parabolic equations}

\author{Vedansh Arya}
\address{Tata Institute of Fundamental Research\\
Centre For Applicable Mathematics \\ Bangalore-560065, India}\email[Vedansh Arya]{vedansh@tifrbng.res.in}

\author{Agnid Banerjee}
\address{Tata Institute of Fundamental Research\\
Centre For Applicable Mathematics \\ Bangalore-560065, India}\email[Agnid Banerjee]{agnidban@gmail.com}

\thanks{Second author is supported in part by SERB Matrix grant MTR/2018/000267 and by Department of Atomic Energy,  Government of India, under
project no.  12-R \& D-TFR-5.01-0520.}


%
%
%
\keywords{}
\subjclass{35A02, 35B60, 35K05}

\maketitle
\begin{abstract}
In this paper, we establish strong backward uniqueness for solutions to  sublinear parabolic equations of the type \eqref{e0}.  The proof of our  main result Theorem \ref{main} is achieved by means of a  new  Carleman  estimate and  a Weiss type monotonicity  that are    tailored for such parabolic sublinear operators.

\end{abstract}

\tableofcontents
 
\section{Introduction and the statement of the main result}
The purpose of this work is to establish strong backward uniqueness for parabolic sublinear equations of the type 
\begin{equation}\label{e0}
\D(A(x, t) \nabla v) + v_t + W v + h((x,t), v) =0
\end{equation}
 where 
\begin{equation}\label{w}
\norm{W}_{L^\infty} \le M
\end{equation}
and the matrix $A$ is  symmetric, uniformly elliptic  and satisfies
\begin{equation}
    \lambda |\xi|^2 \le a^{ij}\xi_i \xi_j \le \Lambda |\xi|^2 \ \forall \xi \in \R^n
\end{equation}
\begin{equation}\label{coef}
    |\nabla a_{ij}(x,t)| \le \frac{M}{1 + |x|}, |\partial_t a_{ij}(x,t)| \leq M.
\end{equation} 
On  the sublinear term $h$, we assume the following.
\begin{align}\label{str}
&h\left ((x,t), 0 \right ) = 0, \\
&H\left ( (x,t), s \right ) = \int_0^s h(X,s) ds,\notag \\
&0 < s h \left ( (x,t) , s \right ) \le q H \left ( (x,t), s \right ) \text{ for some } q \in [1,2),  \notag\\
& H((x,t), s) \geq \ve_0\ \text{for all $|s|>1$ and some  $\ve_0>0$},\notag\\
&|\nabla_xH| \le \frac{C_0}{1+|x|} H,  \notag \\
&|\partial_tH| \le {C_0} H,\notag \\
&h\left ( (x,t) , s \right )  \le C_0 \sum_{i=1}^m |s|^{p_i -1} \text{ for   $p_i$'s  $\in [1,2)$  and some $C_0>0$}.  \notag
\end{align}
We note that from  \eqref{str} it follows that given $L>0$, there exists    $c_0= c_0(L) >0$, such that 
\begin{equation}\label{str1}
 H((x,t), s) \geq c_0 |s|^{q}\ \text{for $|s| < L$}.
\end{equation}

A  prototypical $h$ satisfying \eqref{str}  is given by 
  \[
  h((x,t), v) = \sum_{i=1}^l c_i(x,t) |v|^{p_i-2} v,\] where for each $i$,  $p_i \in [1,2)$,  $0<k_0<c_i< k_1$, $|\nabla_x c_i| < \frac{C_0}{1+|x|}$ and $|\partial_t c_i| < C_0$
 for some $k_0, k_1\ \text{and}\ C_0$.  In this case, we can take  $q= \text{max}\{p_i\}$.  In order to put things in the right perspective, we note that  motivated by the study of nonlinear eigenvalue problems as well as  the analysis of corresponding nodal domains as in \cite{PW} and also  because of certain connections to porous media type equations (as in \cite{Vaz}), Soave  and Weth in \cite{SW}   established weak unique continuation  for equations of the type
 \begin{equation}\label{t100}
 \operatorname{div}(A(x) \nabla v) + h(x, v) + Wv=0
 \end{equation}
 Such equations are modeled on
 \begin{equation}\label{s10}
 -\Delta v = |v|^{p-2} v.
 \end{equation}
 Note that  the study of strong unique continuation for  \eqref{s10} cannot be reduced to that for
 \[
 -\Delta + W
 \]
  because in this case, $W= |v|^{p-2}$ need not be in $L^p$  for any $p$ near the zero set of $v$  as $p \in (1, 2)$.  In fact  such sublinear equations have their intrinsic  difficulties  and this is also partly visible from the fact  that the sign assumption on the sublinearity $h$ in \eqref{str} is quite crucial because otherwise unique continuation fails. This later fact follows  from  a counterexample in \cite{SW} where it is shown that unique continuation is not true for
 \begin{equation}\label{s100}
 \Delta v= |v|^{p-2} v, \ p \in (1, 2).
 \end{equation}
    In \cite{SW}, the authors adapted the frequency function approach  of Garofalo and Lin  as in \cite{GL}.   The  question of strong unique continuation for such  sublinear equations  was then later  addressed by Ruland in \cite{Ru} via new Carleman estimates for  such sublinear elliptic operators. Such a result was generalized to degenerate Baouendi-Grushin type operators by one of us with Garofalo and Manna in \cite{BGM}.   We also  refer to   the  interesting work of Soave and Terracini in \cite{ST},  where the authors study the following two phase membrane   problem
    \begin{equation}\label{t10}
    -\Delta v= \lambda_{+} (v^{+})^{q-1} - \lambda_{-} (v^{-})^{q-1}, \ \text{where $\lambda_{+}, \lambda_{-} >0$, $q \in [1, 2)$}
    \end{equation}
    and established  strong unique continuation property as well as  a regularity result for the nodal domains.  The key object in their analysis was a  new monotonicity formula for a $2$-parameter family of Weiss type functionals first  introduced by Weiss in  his seminal work \cite{We} in the context of classical obstacle problem.   The space like strong unique continuation for such backward parabolic sublinear operators as in \eqref{e0} has been  obtained more recently by one of us with Manna in \cite{BM} by    a generalization of the  Carleman estimates in \cite{EF}, \cite{EFV} to the sublinear situation.   In this paper, we complement the existing unique continuation results  for such sublinear operators  by establishing a strong backward uniqueness result in  this framework. Our main result Theorem \ref{main} generalizes the backward uniqueness result  recently obtained by Wu and Zhang in \cite{WZ} for linear equations with similar structural assumptions( see also \cite{WZ1}).   Similar to that in \cite{BM}, the proof of   our result relies on a careful generalization of the  Carleman estimate in \cite{WZ} to the sublinear case. Also  when the principal part $A=\mathbb{I}$, we obtain a  strong backward uniqueness result when the structure condition \eqref{str} holds for  $p_i, q$ in the bigger range $[1, 2)$( see Theorem \ref{main} ii) below). This is achieved  by means of a new parabolic Weiss type monotonicity formula  which generalizes the one in the elliptic case due to Soave and Terracini in \cite{ST}.   As the reader will see, the proof of both the Carleman estimate as well as the Weiss type monotonicity  is made possible by  a combination of several non-trivial geometric facts which  thanks  to the specific structure of the sublinearity,  beautifully combine.     Moreover compared to that in \cite{BM}, some  new challenges appear   in the proof of the  Carleman estimate  \eqref{ce1} below because the weights involved are different  from that in \cite{BM} and  consequently  our proof crucially relies on     new  inequalities  in such weighted spaces. This    constitutes one of the novelties of our work. We also  believe that the new  parabolic Weiss type monotonicity that we obtain in this framework also has an independent interest. 
We now state our main result.

\begin{thrm}\label{main}
Let $v$ be a solution to the backward parabolic sublinear equation  \eqref{e0} in $\Rn \times [0,1]$. Then the following backward uniqueness results hold.

\medskip

i)  Assume that $v$ satisfies the following Tychonoff type growth assumption
\begin{equation}\label{ty}
|v(x,t)| \leq Ne^{N|x|^2}
\end{equation}
for some $N>0$.
Also assume that   matrix $A$ satisfies the derivative bounds as in   \eqref{coef}, the potential $W$ satisfies the bound in \eqref{w} and the structure condition \eqref{str} holds for $p_i'$s and $ q \in (1,2)$.  Now if  $v$ vanishes to infinite order in the sense of \eqref{vp} at $(0,0)$, then $v \equiv 0$.

\medskip

ii) Let $A= \mathbb{I}$, $v$ be bounded  and assume that $h$ satisfies the structure conditions in \eqref{str} for  $p_i$'s and $q \in [1, 2)$.  Now if $v$ vanishes to infinite order in the sense of \eqref{vp1} at $(0,0)$, then $v \equiv 0$.

\end{thrm}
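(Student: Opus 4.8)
The plan is to prove the two parts by the two distinct mechanisms advertised in the introduction: a Carleman estimate for part i) and a Weiss-type monotonicity formula for part ii). For part i), I would first reduce to a local statement near $(0,0)$ by standard cutoff arguments, using the Tychonoff growth \eqref{ty} to control the error terms that the cutoff generates in the backward parabolic regime — this is exactly where the Gaussian weight $e^{N|x|^2}$ must be dominated by the Carleman weight, and this is the technical heart of why the growth hypothesis is needed. The key step is then to establish a Carleman inequality of the form \eqref{ce1} for the operator $\D(A\nabla\cdot)+\partial_t$, with a weight adapted to the backward direction in the spirit of Wu–Zhang \cite{WZ}, but with the extra sublinear term $h((x,t),v)$ carried along. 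The crucial observation, as in \cite{Ru, BGM, BM}, is that the sign condition $0<sh((x,t),s)\le qH((x,t),s)$ together with $H\ge c_0|s|^q$ lets one absorb the nonlinear term: schematically, $\int h((x,t),v)\,v\,(\text{weight})$ is comparable to $\int H((x,t),v)\,(\text{weight})$, which is a good (signed) term that can be added to the left-hand side of the Carleman estimate rather than treated as a perturbation. The derivative bounds \eqref{coef} on $A$ and \eqref{str} on $H$ (namely $|\nabla_x H|\le \tfrac{C_0}{1+|x|}H$, $|\partial_t H|\le C_0 H$) are precisely what make the commutator terms and the differentiated-nonlinearity terms absorbable into the bulk. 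Once the Carleman estimate is in hand, plugging in $v$ cut off near $(0,0)$, using the infinite-order vanishing \eqref{vp} to kill the boundary contribution at the origin as the large parameter $\tau\to\infty$, and letting $\tau\to\infty$ forces $v\equiv 0$ on a neighborhood; a continuation/connectedness argument (or iterating the local statement) then propagates this to all of $\Rn\times[0,1]$.

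For part ii), with $A=\mathbb I$ and $v$ bounded, the plan is to run a parabolic analogue of the Soave–Terracini Weiss monotonicity \cite{ST, We}. I would introduce a two-parameter family of Weiss-type functionals built from the Gaussian-weighted (Almgren/backward-heat) rescalings of $v$ at $(0,0)$, of the schematic form
\[
\mathcal W_{\beta}(r) = \frac{1}{r^{\gamma_1}}\int (\text{weighted Dirichlet energy of }v) \;-\; \frac{1}{r^{\gamma_2}}\int (\text{weighted }H((x,t),v)),
\]
with exponents $\gamma_1,\gamma_2$ chosen so that the scaling of the Dirichlet term matches that of the sublinear term of order $q$; the freedom to pick $q\in[1,2)$ (rather than $(1,2)$) in the range comes from the fact that in the Weiss approach the endpoint $q=1$ — where the nonlinearity is merely Lipschitz-but-not-$C^1$ and a linear Carleman reduction genuinely fails — is handled by the obstacle-problem-flavored monotonicity rather than by differentiating the equation. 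The main steps are: (a) derive the first-variation/Rellich–Pohozaev identity for \eqref{e0} with $A=\mathbb I$, which produces $\tfrac{d}{dr}\mathcal W_\beta(r)$ as a sum of a manifestly nonnegative square term plus error terms controlled by \eqref{str}'s gradient and time-derivative bounds on $H$ and by $\norm W_{L^\infty}\le M$; (b) show $\mathcal W_\beta(r)$ is (almost) monotone and bounded, hence has a limit as $r\to 0^+$; (c) use infinite-order vanishing \eqref{vp1} to show the limit is the value of $\mathcal W_\beta$ on the zero function, forcing the square term in $\tfrac{d}{dr}\mathcal W_\beta$ to vanish, which pins down the blow-up limit to be a homogeneous solution; (d) classify such homogeneous solutions of the parabolic sublinear equation and conclude the only one compatible with infinite-order vanishing is $0$, then upgrade from the blow-up to $v$ itself and propagate.

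The main obstacle I anticipate is the Carleman estimate \eqref{ce1} in part i). As the authors themselves flag, the weights here differ from those in \cite{BM}, so the usual conjugation-and-commutator bookkeeping must be redone with new weighted Hardy/Rellich-type inequalities to absorb the first-order and zeroth-order terms (including the sublinear $h$) into the positive bulk terms; getting the constants to line up — in particular making the coefficient of the gradient term and the coefficient coming from the sublinearity compatible over the full parameter range — is where the ``combination of several non-trivial geometric facts'' will be doing real work. For part ii), the analogous sticking point is step (a): obtaining a clean monotonicity formula requires the error terms from the non-constancy of $H$ in $(x,t)$ and from $W$ to be integrable against $dr/r$ after rescaling, which should follow from \eqref{str} and \eqref{w} but needs the rescaling exponents to be chosen with care; and step (d), the Liouville-type classification of homogeneous solutions, must rule out nontrivial homogeneous profiles, using the strict sign condition in \eqref{str} exactly as the counterexample \eqref{s100} shows is necessary.
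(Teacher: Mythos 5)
Your high-level outline for part i) captures the role of the sign condition and the derivative bounds on $A$ and $H$, but it misses the architectural first step that the paper actually relies on: before the Carleman machinery can engage, one must already know $v(\cdot,0)\equiv 0$ and that $v$ vanishes to infinite order in \emph{time} at $t=0$. The paper obtains this from the space-like strong unique continuation theorem of \cite{BM}, then extends $v$ by zero to $t<0$, rescales so that $v\equiv 0$ for $t\le 1/2$, and applies the Carleman estimate to $u=\eta(t)v$ with $\eta$ a temporal cutoff — not a cutoff ``near $(0,0)$.'' The weight $G$ in \eqref{g} is singular at $t=0$ and global in $x$; the Tychonoff bound \eqref{ty} (after rescaling to get $|\tilde v|\le Ce^{\frac{b}{8}|x|^2}$) is what makes the integrals converge at $|x|\to\infty$, not what ``kills a boundary contribution at the origin.'' The iteration is also different from what you describe: one minorizes the left side over $\frac12\le t\le l$, divides by $e^{2\gamma(l^{-K}-1)}$, and sends the Carleman parameter $\gamma\to\infty$ to conclude $v\equiv 0$ on $\frac12\le t\le l$, then repeats. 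Without the initial reduction via \cite{BM}, your scheme has no foothold: the Carleman estimate of Theorem \ref{ce} requires $u\in C_0^\infty(\R^n\times(0,T))$, which is exactly why one needs vanishing near $t=0$ first.

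For part ii) your roadmap (almost-monotone Weiss functional $\to$ existence of blow-up $\to$ homogeneous blow-up limits $\to$ Liouville classification $\to$ upgrade and propagate) is substantially more elaborate than — and in places goes beyond what is justified by — the paper's argument, which is a one-shot contradiction with no blow-up analysis and no classification of homogeneous solutions at all. The paper defines $W_\gamma(R)=I(R)/R^{2\gamma}-\gamma H(R)/(2R^{2\gamma})$ with $I(R)=R^2\int_{t=R^2}|\nabla v|^2G - 2R^2\int_{t=R^2}H(X,v)G$, proves $W_\gamma'\ge 0$ for $\gamma$ large (using $vh\le qH$, $q<2$, \emph{and} the lower bound $H\ge c_0|v|^q$ which is where the boundedness of $v$ enters), and then simply observes: if $v(\cdot,R^2)\not\equiv 0$ for some $R$, then $H(R)>0$, so taking $\gamma$ large enough makes $W_\gamma(R)<0$ while simultaneously ensuring monotonicity; monotonicity then gives $W_\gamma(0^+)\le W_\gamma(R)<0$, while infinite-order space-time vanishing \eqref{vp1} forces $W_\gamma(0^+)\ge 0$ — contradiction. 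This already kills $v(\cdot,R^2)$ for \emph{every} $R\in(0,1)$, so there is nothing to ``propagate.'' Your steps (c) and (d) are not sketched in any actionable way (classifying homogeneous solutions of the parabolic sublinear equation is a nontrivial problem in its own right), and they are not needed; you would do better to notice that the parameter $\gamma$ can be chosen to make the Weiss value at scale $R$ strictly negative, turning the whole argument into a single monotonicity-plus-sign contradiction as in \cite{Po}.
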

Before proceeding further, we make a couple of  important remarks.
\begin{rmrk}
We note that it is shown in \cite{BM} that the two notions of vanishing order \eqref{vp} and \eqref{vp1} coincide when $v$ solves \eqref{e0}  with $h$ satisfying \eqref{str} for $p_i, q \in (1,2)$.  We also observe that unlike Theorem \ref{main} i), the result in Theorem \ref{main} ii) covers the  end point  case  $q=1$ when $A=\mathbb{I}$. The  proof of this later fact is based on a new  Weiss type monotonicity  which works for any generic sublinearity satisfying \eqref{str}  and thus  provides an alternate proof of the backward uniqueness result in i) when the  principal part is the Laplacian. A typical scenario where the result in   Theorem \ref{main} ii) applies is say in the case  of a     parabolic two phase membrane problem of the type,
   \[
    -\Delta v=  v_t  + \lambda_{+} (v^{+})^{q-1} - \lambda_{-} (v^{-})^{q-1}, \ \text{where $\lambda_{+}, \lambda_{-} >0$, $q \in [1, 2)$.}\]
    Over here, we note that when $q=1$, $(v^+)^{q-1} \coloneqq \chi_{\{v>0\}}$ and $(v^-)^{q-1} \coloneqq \chi_{\{v<0\}}$.   It remains to be seen whether Theorem \ref{main} ii) continues to be valid for more general $A$ satisfying \eqref{coef}. In this regard,  we would  however like to  mention that to the best of our knowledge, Weiss type monotonicity is not known even for linear parabolic equations with variable coefficient principal part satisfying \eqref{coef}.     \end{rmrk}

    \begin{rmrk}
    We would also like to remark that a growth condition of the type \eqref{ty} is needed for backward uniqueness to hold. This follows from  a counterexample due to Frank Jones in \cite{J} where it is shown that there exists a non-trivial  unbounded caloric function that is supported in a time strip of the type $\R^n \times (t_1, t_2)$. Also from an example as   in \cite{WZ1}, it follows that  the decay assumption on  the derivatives of the principal part  as in \eqref{coef} is somewhat  optimal as well. We note that such a decay is related to the exponential growth rate of the solution as in \eqref{ty}. 
    \end{rmrk}
We finally note that the subject of strong unique continuation and backward uniqueness has a long history and several ramifications by now. We refer the reader to  \cite{Al}, \cite{AKS}, \cite{Car}, \cite{Ch}, \cite{EF}, \cite{EFV}, \cite{GL}, \cite{Ho},  \cite{JK}, \cite{KT0}, \cite{KT}, \cite{L},\cite{LM},  \cite{LO}, \cite{Mi}, \cite{Po}, \cite{V} and  \cite{Y}  and one can find other references therein. 

The paper is organized as follows. In Section \ref{s:n}, we introduce some basic  notations   and also  gather some known results that are relevant to our present work. In Section \ref{s:main}, we prove our new Carleman estimate and  a Weiss type monotonicity  and consequently establish our backward uniqueness results.

\section{Notations and Preliminaries}\label{s:n}

  A generic point $(x,t)$ in space time $\Rn \times (0, \infty)$ will be denoted by $X$. For notational convenience, $\nabla f$ and  $\operatorname{div}\ f$ will respectively refer to the quantities  $\nabla_x f$ and $ \operatorname{div}_x f$ of a given function $f$.   The partial derivative in $t$ will be denoted by $\p_t f$ and also by $f_t$. The partial derivative $\partial_{x_i} f$  will be denoted by $f_i$. We indicate with $C_0^{\infty}(\Omega)$ the set of compactly supported smooth functions in the region $\Omega$  in space-time.  Also for $H$ as in \eqref{str}, $\partial_t H$ or $H_t$ will denote the derivative with respect to the variable $t$ of the function,
  \[
  t \to H((x,t), v)
  \]
  where $x$ and $v$ are treated as constants. Likewise, $\nabla_x H$ will denote the derivative with respect to the variable $x$ of the function
  \[
  x \to H((x,t), v)
  \]
  with $t$ and $v$ being constants. 
  
  We now define the relevant notions of vanishing to infinite order.

\begin{dfn}\label{vp}
We say that a function $u$ to vanishes to infinite order in space  at some $(x_0, t_0)$ if
 \begin{equation}\label{van}
 \begin{cases}
\text{ given $k>0$, there exists $C_k>0$ such that}
\\
|u(x,t_0)|\leq C_k|x-x_0|^k\ \text{as $x \to x_0$,}\end{cases}\end{equation}
\end{dfn}

Likewise, vanishing to infinite order in space-time is defined as follows.

\begin{dfn}\label{vp1}
We say that a function $u$ to vanishes to infinite order in space-time  at some $(x_0, t_0)$ if
\begin{equation}\label{van1}
\begin{cases}
\text{ given $k>0$, there exists $C_k>0$ such that}
\\
|u(x, t) | \leq C_k (|x - x_0|^2 + |t-t_0|^{1/2})^k\ \text{as $(x,t) \to (x_0, t_0)$}
\end{cases}
\end{equation}
\end{dfn}

We now state the relevant Rellich type identity from \cite{WZ} which is a slight generalization of the one in \cite{EF} and \cite{EFV}. See Lemma 3.1 in  \cite{WZ}.

\begin{lemma}\label{re1}
Suppose $F$ is differentiable, $F_0$ and $G$ are twice differentiable and $G>0$. Then, the following identity holds for any $u \in C_0^\infty(\R^n \times [0,T])$,
\begin{align}\label{re2}
 &\frac{1}{2} \int_{\R^n \times [0,T]} M_0 u^2G dx dt + \int_{\R^n \times [0,T]} \left ( 2D_G + ( \frac{\partial_t G - \tilde{\Delta}G}{G} - F ) A \right ) \nabla u \cdot \nabla u G dx dt 
 \\
 &- \int_{\R^n \times [0,T]} uA \nabla u \cdot \nabla (F- F_0) G dx dt \notag \\
    & = 2 \int_{\R^n \times [0,T]}Lu(Pu - Lu)G dx dt + \int_{\R^n} A \nabla u \cdot \nabla u G dx |_0^T + \frac 12 \int_{\R^n}u^2 FG dx |_0^T\notag
\end{align}
where 
\begin{align*}
\tilde \Delta &= \D(A(x)\nabla )\\
Pu &= \D(A(x,t) \nabla u) + u_t\\
    Lu &= u_t - A \nabla u \cdot \nabla \log G + \frac{F}{2}u \\
    M_0 &= \partial_t F + F \left ( \frac{ \partial _t G - \tilde{\Delta}G}{G} - F \right ) + \tilde{\Delta}{F_0} - A \nabla (F - F_0) \cdot \nabla \log G, 
    \end{align*}
and $D_G^{ij} = A^{ik} \partial_{kl} ( \log G ) a^{lj} + \frac{\partial_l (\log G)}{2}(a^{ik}\partial_k a^{lj} + a^{jk}\partial_k a^{li} - a^{kl}\partial_k a^{ij}) + \frac 12 \partial _t a^{ij}$
\end{lemma}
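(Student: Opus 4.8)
This is a Rellich--Pohozaev--Ne\v{c}as type identity, and the plan is to derive it by carrying out one long integration by parts on the quantity $2\int Lu(Pu-Lu)G\,dxdt$ appearing on the right of \eqref{re2}. The starting point is the purely algebraic identity
\[
(Pu-Lu)G=\operatorname{div}(GA\nabla u)-\tfrac{F}{2}uG,
\]
which follows from $Pu-Lu=\operatorname{div}(A\nabla u)+A\nabla u\cdot\nabla\log G-\tfrac{F}{2}u$ together with $\operatorname{div}(GA\nabla u)=G[\operatorname{div}(A\nabla u)+A\nabla u\cdot\nabla\log G]$. Thus $2\int Lu(Pu-Lu)G=2\int Lu\,\operatorname{div}(GA\nabla u)-\int Lu\,FuG$, and I would insert the three pieces of $Lu=u_t-A\nabla u\cdot\nabla\log G+\tfrac{F}{2}u$ into each integral and process the resulting terms one at a time, sorting them throughout into: terms quadratic in $\nabla u$, mixed terms of the form $u\,A\nabla u\cdot(\,\cdot\,)$, terms proportional to $u^2$, and exact $t$-derivatives of products.

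First I would treat the time piece $2\int u_t\,\operatorname{div}(GA\nabla u)$: integrate by parts in $x$ (no spatial boundary term since $u$ is compactly supported in $\R^n$), then use $\partial_t(A\nabla u\cdot\nabla u)=\partial_tA\,\nabla u\cdot\nabla u+2A\nabla u_t\cdot\nabla u$ to split off an exact $t$-derivative; integrating that in $t$ produces the boundary term $\int_{\R^n}A\nabla u\cdot\nabla u\,G\,|_0^T$ and the bulk terms $\int\frac{\partial_tG}{G}A\nabla u\cdot\nabla u\,G$ and $\int\partial_tA\,\nabla u\cdot\nabla u\,G$. The potential piece $2\int\tfrac{F}{2}u\,\operatorname{div}(GA\nabla u)$ integrates by parts in $x$ to $-\int FA\nabla u\cdot\nabla u\,G-\int u\,A\nabla u\cdot\nabla F\,G$. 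In the group $-\int Lu\,FuG$: the summand $-\int u_tFuG=-\tfrac12\int\partial_t(u^2)FG$ gives, after integration by parts in $t$, the boundary term $\tfrac12\int_{\R^n}u^2FG\,|_0^T$ and $\tfrac12\int u^2\partial_t(FG)$; the summand $\int(A\nabla u\cdot\nabla\log G)FuG=\tfrac12\int FG\,A\nabla\log G\cdot\nabla(u^2)$ integrates by parts in $x$, using $\tilde\Delta G=\operatorname{div}(GA\nabla\log G)$, to $-\tfrac12\int F\tfrac{\tilde\Delta G}{G}u^2G-\tfrac12\int(A\nabla F\cdot\nabla\log G)u^2G$; and the last summand is $-\tfrac12\int F^2u^2G$.

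The core of the argument is the genuine Rellich--Ne\v{c}as piece $-2\int(A\nabla u\cdot\nabla\log G)\,\operatorname{div}(GA\nabla u)$. Writing $b^k=a^{kl}\partial_l\log G$ and integrating by parts in $x$ gives $2\int(\partial_ib^k\,\partial_ku+b^k\partial_{ik}u)\,Ga^{ij}\partial_ju$. In the first part I would expand $\partial_ib^k=\partial_ia^{kl}\partial_l\log G+a^{kl}\partial_{il}\log G$, which yields the Hessian term $A^{ik}\partial_{kl}(\log G)a^{lj}$ and one $\partial a$-term. In the second part I would use $a^{ij}\partial_{ik}u\,\partial_ju=\tfrac12\bigl[\partial_k(A\nabla u\cdot\nabla u)-\partial_ka^{ij}\partial_iu\partial_ju\bigr]$ and integrate the exact-derivative part by parts once more in $x$, again via $\tilde\Delta G=\operatorname{div}(GA\nabla\log G)$, obtaining $-\int\tfrac{\tilde\Delta G}{G}A\nabla u\cdot\nabla u\,G$ plus a second $\partial a$-term. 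Symmetrizing the two $\partial a$-contributions in the indices of $\nabla u$ is precisely what assembles the stated matrix $D_G^{ij}$, with the $\tfrac12\partial_ta^{ij}$ entry supplied by the $\int\partial_tA\,\nabla u\cdot\nabla u\,G$ term from the time piece. I expect this symmetrization, carried out with $A$ variable inside the divergence, to be the main obstacle: one must keep careful track of which derivative falls on $A$, which on $\log G$, and which on $u$, in order to land on the exact three-term combination $a^{ik}\partial_ka^{lj}+a^{jk}\partial_ka^{li}-a^{kl}\partial_ka^{ij}$.

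It remains to collect, and this is where $F_0$ enters, since it has not appeared so far. Splitting the mixed term as $-\int u\,A\nabla u\cdot\nabla F\,G=-\int u\,A\nabla u\cdot\nabla(F-F_0)G-\int u\,A\nabla u\cdot\nabla F_0\,G$ and integrating the last summand by parts in $x$, using $\operatorname{div}(GA\nabla F_0)=G\tilde\Delta F_0+G\,A\nabla F_0\cdot\nabla\log G$, converts it into the $u^2$-terms $\tfrac12\int u^2G(\tilde\Delta F_0+A\nabla F_0\cdot\nabla\log G)$ and leaves the mixed term exactly in the form $-\int u\,A\nabla u\cdot\nabla(F-F_0)G$ demanded by \eqref{re2}. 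Then the $\nabla u$-quadratic terms add up to $\int\bigl(2D_G+(\tfrac{\partial_tG-\tilde\Delta G}{G}-F)A\bigr)\nabla u\cdot\nabla u\,G$; the $u^2$-terms have total coefficient $\tfrac12\bigl[\partial_tF+F\tfrac{\partial_tG}{G}-F\tfrac{\tilde\Delta G}{G}-F^2-A\nabla F\cdot\nabla\log G+\tilde\Delta F_0+A\nabla F_0\cdot\nabla\log G\bigr]=\tfrac12M_0$; and the boundary terms produced are $-\int_{\R^n}A\nabla u\cdot\nabla u\,G\,|_0^T-\tfrac12\int_{\R^n}u^2FG\,|_0^T$, which cancel against the explicit boundary terms on the right of \eqref{re2}. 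Comparing the two sides then completes the proof.
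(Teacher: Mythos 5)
The paper itself does not prove Lemma \ref{re1}: it imports the identity verbatim from Lemma~3.1 of Wu--Zhang \cite{WZ}, with only a pointer to that reference. So there is no in-paper argument to compare against; your job is effectively to reconstruct the derivation from \cite{WZ}, and you do so correctly. The reduction $(Pu-Lu)G=\D(GA\nabla u)-\tfrac{F}{2}uG$ is the right starting point; processing the three summands of $Lu$ in $2\int Lu\,\D(GA\nabla u)$ and the three summands of $-\int Lu\,FuG$ reproduces every term. In particular, writing $b^k=a^{kl}\partial_l\log G$ gives the Hessian block $A^{ik}\partial_{kl}(\log G)a^{lj}$ of $D_G$ and one $\partial a$ term from $\partial_i b^k$; the pointwise identity $a^{ij}\partial_{ik}u\,\partial_j u=\tfrac12\bigl[\partial_k(A\nabla u\cdot\nabla u)-\partial_k a^{ij}\partial_iu\partial_ju\bigr]$, a second integration by parts via $\D(GA\nabla\log G)=\tilde\Delta G$, and symmetrization in the $\nabla u$ indices then assemble exactly the three-term combination $a^{ik}\partial_k a^{lj}+a^{jk}\partial_k a^{li}-a^{kl}\partial_k a^{ij}$; the time piece supplies $\tfrac12\partial_t a^{ij}$; and the split $\nabla F=\nabla(F-F_0)+\nabla F_0$ with an integration by parts on the $F_0$ part yields the $\tilde\Delta F_0+A\nabla F_0\cdot\nabla\log G$ contribution to $M_0$, with the residual mixed term appearing in the required form $-\int uA\nabla u\cdot\nabla(F-F_0)G$.

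Two minor remarks. First, in your narration of the two time boundary terms you drop their minus signs (``produces the boundary term $\int_{\R^n}A\nabla u\cdot\nabla u\,G|_0^T$'', ``the boundary term $\tfrac12\int_{\R^n}u^2FG|_0^T$''), but your final accounting lists them with the correct signs, $-\int_{\R^n}A\nabla u\cdot\nabla u\,G|_0^T-\tfrac12\int_{\R^n}u^2FG|_0^T$, so the cancellation against the explicit boundary terms on the right of \eqref{re2} goes through. Second, the statement writes $\tilde\Delta=\D(A(x)\nabla)$ while $P$ uses $A(x,t)$; your substitutions $\tilde\Delta G=\D(GA\nabla\log G)$ and $\D(GA\nabla F_0)=G\tilde\Delta F_0+GA\nabla F_0\cdot\nabla\log G$ read $\tilde\Delta$ as the spatial divergence operator with the same matrix $A$ and $t$ frozen, which is the intended meaning here; it would be worth flagging this reading explicitly rather than letting it pass silently.
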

Similar to that in \cite{WZ}, we now let 
\begin{equation}\label{g}
G = e^{2\gamma(t^{-K} -1) - \frac{b \left< x \right>^2 + K}{t}}, \ \text{($K$ large enough to be chosen later)} \end{equation}
where $<x>= \sqrt{1+ |x|^2}$.
Then we observe that
\begin{equation}\label{g1}
\frac{\partial_tG - \Tilde{\Delta}G}{G} = \frac{b \left< x \right>^2 - 4 b^2 a^{ij}x_ix_j + K}{t^2} + \frac{2 b \left ( a^{ii} + \partial_k a^{kl}x_l \right )}{t}  - 2 \gamma \frac{K}{t^{K+1}}\end{equation}
Likewise, we define
\begin{align}\label{ff0}
    F & = \frac{b \left< x \right>^2 - 4 b^2  a^{ij}x_ix_j + K}{t^2} + \frac{2 b  a^{ii} - d}{t} - \frac{2\gamma K}{t^{K+1}}, \text{    $(d$ will be chosen in terms of $K$ as in Lemma \ref{bdd1} below)} \\
   {F_0} & = \frac{b \left< x \right>^2 - 4 b^2 a_{\ve}^{ij}x_ix_j + K}{t^2} + \frac{2 b a_{\ve}^{ii} - d}{t} - \frac{2\gamma K}{t^{K+1}}\notag
\end{align}
where $a_{\ve}^{ij}$ is the $\ve$ mollification of $a^{ij}$ corresponding to $\ve=1/2$ ( see lemma 3.2 in \cite{WZ}). We also need the following inequalities from \cite{WZ} corresponding to these choices of $G, F$ and $F_0$( see lemma 3.3 in \cite{WZ}).
\begin{lemma}\label{bdd1}
 Set $b = \frac{1}{8\Lambda}$ and $ d = \frac{K}{4}$. Then  for $ K \ge K_0(n, \Lambda, \lambda, M )$, we have 
\begin{align*}
    2D_G + \left (\frac{\partial_tG - \tilde{\Delta}G}{G} - F \right ) A & \ge \frac{\lambda K}{8t} I_n \\
    \partial_tF + F \left ( \frac{\partial_t G - \Tilde{\Delta}G}{G} - F \right ) & \ge \frac{b K \left< x \right>^2}{16t^3} \\
    |\tilde{\Delta}F_0| & \le \frac{C\left< x \right>^2}{t^2} \\
    |\nabla(F - F_0)| & \le \frac{C\left< x \right> }{t^2}
\end{align*}
where $C$ depends on $n, \lambda, \Lambda, M$ and is independent of $K$.
\end{lemma}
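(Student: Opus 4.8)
The plan is to establish all four estimates by direct computation from the explicit formulas \eqref{g}, \eqref{g1} and \eqref{ff0}, exploiting three structural features: a clean cancellation in $\frac{\partial_t G - \tilde{\Delta} G}{G} - F$, the dominance of the term $\frac{d}{t} = \frac{K}{4t}$ once $K$ is large, and the fact that every ``error'' term produced by a derivative of $a^{ij}$ carries the decay factor $(1+|x|)^{-1}$ from \eqref{coef} and is independent of $K$. The first step is to record, by subtracting \eqref{ff0} from \eqref{g1}, the identity
\[
\frac{\partial_t G - \tilde{\Delta} G}{G} - F = \frac{d + 2b\,\partial_k a^{kl} x_l}{t}.
\]
By \eqref{coef}, $|\partial_k a^{kl}x_l| \le \frac{Mn|x|}{1+|x|} \le Mn$, so with $d = K/4$ this quantity is $\ge \frac{K/4 - 2bMn}{t} \ge \frac{K}{8t}$ once $K$ is large.

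For the first estimate I would compute $D_G$ from its definition in Lemma \ref{re1}: since $\partial_{kl}\log G = -\frac{2b}{t}\delta_{kl}$ and $\partial_l \log G = -\frac{2bx_l}{t}$, the leading piece of $2D_G$ is $-\frac{4b}{t}A^2 \ge -\frac{4b\Lambda^2}{t}I_n = -\frac{\Lambda}{2t}I_n$ using $b = \frac{1}{8\Lambda}$, while the remaining pieces involve $\nabla a^{ij}$ or $\partial_t a^{ij}$ and are bounded, by \eqref{coef} and $t \le 1$ (so that $|\partial_t a^{ij}| \le M$ is absorbed by $\frac{M}{t}$), by $\frac{C}{t}I_n$ with $C$ independent of $K$. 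Combining with $\big(\frac{\partial_t G - \tilde{\Delta} G}{G} - F\big)A \ge \frac{\lambda K}{8t}I_n$ from uniform ellipticity and the identity above, and choosing $K$ large, gives the first inequality.

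For the second estimate I would substitute the identity to write $\partial_t F + F\big(\frac{\partial_t G - \tilde{\Delta} G}{G} - F\big) = \partial_t F + F\cdot\frac{d + 2b\,\partial_k a^{kl}x_l}{t}$ and then differentiate $F$ from \eqref{ff0} in $t$. The leading $t^{-3}$ terms combine into $\frac{(d-2)(b\langle x\rangle^2 + K)}{t^3}$, which for $K$ large is $\ge \frac{K}{8}\cdot\frac{b\langle x\rangle^2 + K}{t^3} \ge \frac{bK\langle x\rangle^2}{8t^3}$; the $t^{-K-2}$ terms combine into $\frac{2\gamma K(K+1-d)}{t^{K+2}} = \frac{2\gamma K(3K/4 + 1)}{t^{K+2}} > 0$; and every remaining term is an error coming from $\partial_t a^{ij}$ or $\partial_k a^{kl}x_l$, hence controlled by $\frac{C\langle x\rangle^2}{t^3}$ with $C$ independent of $K$, together with a cross term $\frac{2b\partial_k a^{kl}x_l}{t}\cdot\big(-\frac{2\gamma K}{t^{K+1}}\big)$ bounded by $\frac{CbMn\gamma K}{t^{K+2}}$. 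Since the good $t^{-K-2}$ coefficient grows like $K^2$ and this bad one only like $K$, and since $\frac{bK\langle x\rangle^2}{8t^3}$ dominates $\frac{C\langle x\rangle^2}{t^3}$ for $K$ large, all errors are absorbed into $\frac{bK\langle x\rangle^2}{16t^3}$, giving the second inequality.

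The third and fourth estimates are purely elliptic and involve no $K$: writing out $\nabla_x F_0$ and $\nabla_x(F - F_0)$ from \eqref{ff0}, one uses that the fixed-scale ($\ve = 1/2$) mollification $a_\ve^{ij}$ inherits from \eqref{coef} the bounds $|a^{ij}-a_\ve^{ij}| + |\nabla(a^{ij}-a_\ve^{ij})| \le \frac{C}{1+|x|}$ and $|\nabla a_\ve^{ij}| + |\nabla^2 a_\ve^{ij}| \le \frac{C}{1+|x|}$, the second-derivative bound coming from differentiating the mollifier (cf. Lemma 3.2 in \cite{WZ}). Then $|\nabla(F-F_0)| \le \frac{C|x|}{t^2} + \frac{C}{t} \le \frac{C\langle x\rangle}{t^2}$ using $\frac{|x|^2}{1+|x|} \le |x|$ and $t \le 1$, and applying $\tilde{\Delta} = \D(A(x)\nabla\,\cdot\,)$ to $F_0$ produces only terms of the form $(\nabla a)\cdot\nabla F_0$ and $A\!:\!\nabla^2 F_0$, each bounded by $\frac{C\langle x\rangle^2}{t^2}$ by the same reasoning. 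The main obstacle is the bookkeeping in the second estimate — in particular checking that the $t^{-K-2}$ error obtained by mixing the $t^{-K-1}$ part of $F$ with the $O(1)$ factor $\partial_k a^{kl}x_l$ is genuinely lower order in $K$ than the good $t^{-K-2}$ term; everything else is routine once the cancellation $\frac{\partial_t G - \tilde{\Delta} G}{G} - F = \frac{d + 2b\,\partial_k a^{kl}x_l}{t}$ is in hand.
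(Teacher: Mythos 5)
The paper gives no proof of this lemma at all: it is imported verbatim from \cite{WZ} (Lemma 3.3 there), so your direct verification from \eqref{g}, \eqref{g1}, \eqref{ff0} is exactly the intended route, and most of it is sound. The cancellation $\frac{\partial_tG-\tilde\Delta G}{G}-F=\frac{d+2b\,\partial_ka^{kl}x_l}{t}$ is correct; the first inequality follows as you say from $2A\,\partial^2(\log G)\,A=-\frac{4b}{t}A^2\ge-\frac{\Lambda}{2t}I_n$ (with $b=\frac1{8\Lambda}$) plus $O(1/t)$ errors from $\nabla a,\partial_ta$ and $t\le1$, the scalar factor being $\ge\frac{K/4-C}{t}$ so that the margin over $\frac{\lambda K}{8t}$ absorbs the errors for $K$ large; and the third and fourth estimates are handled correctly via the fixed-scale mollification bounds.

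The one genuine slip is in the bookkeeping for the second inequality. Writing $F=\frac{P}{t^2}+\frac{Q}{t}-\frac{2\gamma K}{t^{K+1}}$ with $P=b\langle x\rangle^2-4b^2a^{ij}x_ix_j+K$ and $Q=2ba^{ii}-d$, the $t^{-2}$ block after substituting the cancellation identity is $\frac{\partial_tP+(d-1)Q+2b\,\partial_ka^{kl}x_l\,Q}{t^2}$. Its piece $(d-1)Q\approx-d(d-1)=-\frac{K^2}{16}+O(K)$ does not come from $\partial_ta^{ij}$ or $\partial_ka^{kl}x_l$ and is not bounded by $C\langle x\rangle^2$ with $C$ independent of $K$, so it is not covered by your stated error class; nor can it be absorbed by the portion $\frac{bK\langle x\rangle^2}{8t^3}$ you retained, since at $x=0$ that is only $O(K/t^3)$ against a deficit of order $\frac{K^2}{16t^2}$. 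The fix is to keep the part of the leading term you discarded: since $4b^2a^{ij}x_ix_j\le\frac b2|x|^2$, one has $\frac{(d-2)P}{t^3}\ge\frac{(d-2)K}{t^3}+\frac b2\frac{(d-2)\langle x\rangle^2}{t^3}$, and then $t\le1$ together with $d=\frac K4<K$ gives $K(d-2)-d(d-1)-CK\ge\frac{3K^2}{16}-C'K>0$, which swallows the quadratic-in-$K$ and linear-in-$K$ errors, while $\frac b2(d-2)-C\ge\frac{bK}{16}$ for $K\ge K_0$ yields exactly the claimed $\frac{bK\langle x\rangle^2}{16t^3}$. With that correction your argument closes; the $t^{-K-2}$ block is fine as written, since its coefficient $2\gamma K(K+1-d)\sim\frac{3\gamma K^2}{2}$ dominates the $O(\gamma K)$ term coming from $2b\,\partial_ka^{kl}x_l$.
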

 Lemma \ref{bdd1} in particular implies that the following estimate holds for the quantity on the left hand side of the identity in \eqref{re2}. 
\begin{lemma}\label{bdd2}
With $G, F, F_0$ as in Lemma \ref{bdd1}, we have that the following integral estimate holds for all $K$ sufficiently large and  $u \in C_0^\infty(\R^n \times  (0,T))$,
\begin{align}\label{l1}
&\frac{1}{2}\int_{\R^n \times (0,T)} M_0 u^2 G dx dt + \int_{\R^n \times (0,T)} \left(2D_G + \left( \frac{\partial_tG - \tilde{\Delta}G}{G} - F \right )A \right ) \nabla u \cdot \nabla u G dx dt 
\\
&- \int_{\R^n \times (0,T)} u A \nabla u \cdot \nabla(F- F_0) G dx dt\notag\\
 & \ge  C K\left( \int_{\R^n \times (0,T)} u^2 \frac{ \left< x \right>^2}{t^3}G dx dt + \int_{\R^n \times (0,T)} \frac{|\nabla u|^2}{t} G dx dt \right)\notag
\end{align}
for some universal $C>0$. 
\end{lemma}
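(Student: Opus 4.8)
The plan is to estimate each of the three integrals on the left-hand side of \eqref{l1} separately, using the pointwise bounds from Lemma \ref{bdd1}, and then absorb error terms into the two positive quantities appearing on the right-hand side. Throughout we use that $K$-independence of the constant $C$ in Lemma \ref{bdd1} is what allows the absorption once $K$ is taken large, and that we may assume $T \le 1$, so that $t^{-2} = t\cdot t^{-3} \le T\, t^{-3}\le t^{-3}$ on the support of $u$.

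First I would treat the $M_0$ term. Recall $M_0 = \big(\partial_t F + F(\frac{\partial_t G - \tilde\Delta G}{G} - F)\big) + \tilde\Delta F_0 - A\nabla(F-F_0)\cdot\nabla\log G$. The second inequality in Lemma \ref{bdd1} gives the first bracket $\ge \frac{bK\langle x\rangle^2}{16 t^3}$. From \eqref{g} one computes $\nabla_x\log G = -\frac{2bx}{t}$, hence $|\nabla\log G|\le \frac{C\langle x\rangle}{t}$ with $C$ independent of $K$ (as $b=\frac1{8\Lambda}$ is fixed); combining with the fourth inequality $|\nabla(F-F_0)|\le\frac{C\langle x\rangle}{t^2}$ and uniform ellipticity gives $|A\nabla(F-F_0)\cdot\nabla\log G|\le \frac{C\langle x\rangle^2}{t^3}$. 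Together with the third inequality $|\tilde\Delta F_0|\le\frac{C\langle x\rangle^2}{t^2}\le\frac{C\langle x\rangle^2}{t^3}$ we obtain
\begin{equation*}
M_0 \ge \frac{bK\langle x\rangle^2}{16 t^3} - \frac{C\langle x\rangle^2}{t^3} \ge c K \frac{\langle x\rangle^2}{t^3}
\end{equation*}
for $K\ge K_1(n,\lambda,\Lambda,M)$, with $c=c(\Lambda)>0$. Multiplying by $\frac12 u^2 G$ and integrating yields $\frac12\int M_0 u^2 G\,dxdt \ge \frac{cK}{2}\int u^2\frac{\langle x\rangle^2}{t^3}G\,dxdt$.

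Next, the first inequality in Lemma \ref{bdd1} gives immediately
\begin{equation*}
\int \Big(2D_G + \big(\tfrac{\partial_t G - \tilde\Delta G}{G} - F\big)A\Big)\nabla u\cdot\nabla u\, G\,dxdt \ \ge\ \frac{\lambda K}{8}\int \frac{|\nabla u|^2}{t}G\,dxdt.
\end{equation*}
It remains to control the cross term $-\int u A\nabla u\cdot\nabla(F-F_0)G\,dxdt$, which is the only genuinely delicate point. Using ellipticity and the fourth inequality of Lemma \ref{bdd1}, its integrand is bounded in absolute value by $\Lambda|u||\nabla u|\frac{C\langle x\rangle}{t^2}$; Young's inequality with a small parameter $\delta>0$ (to be fixed, independent of $K$) gives $\le \frac{\delta}{t}|\nabla u|^2 + \frac{C_\delta}{t^3}u^2\langle x\rangle^2$ with $C_\delta$ independent of $K$. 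Choosing $\delta = \lambda/16$ and then $K$ large enough that $\frac{cK}{2} - C_\delta \ge \frac{cK}{4}$, the two error pieces are absorbed into the previous two estimates, leaving
\begin{equation*}
\text{LHS of }\eqref{l1} \ \ge\ \frac{cK}{4}\int u^2\frac{\langle x\rangle^2}{t^3}G\,dxdt + \frac{\lambda K}{16}\int\frac{|\nabla u|^2}{t}G\,dxdt,
\end{equation*}
which is the claimed bound with $C=\min\{c/4,\lambda/16\}$. The main obstacle is simply the careful bookkeeping in this last step: one must verify that the constant multiplying the cross-term estimate is genuinely $K$-independent — which hinges on the fact that $b$ and $d$ in Lemma \ref{bdd1} are fixed and $C$ there does not depend on $K$ — so that enlarging $K$ really does dominate it; the use of $t\le T\le 1$ to pass from $t^{-2}$ to $t^{-3}$ in the $\tilde\Delta F_0$ bound is the only other point requiring attention.
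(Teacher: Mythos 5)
Your argument is correct and is essentially the same computation the paper delegates to estimates (41)--(44) of \cite{WZ}: one combines the four pointwise bounds of Lemma \ref{bdd1} together with $|\nabla \log G|\le 2b\left< x \right>/t$ (a $K$-independent bound), handles the cross term by Young's inequality with $K$-independent constants, and then absorbs the errors by taking $K$ large. The only minor caveat is that replacing $t^{-2}$ by $t^{-3}$ in the $\tilde\Delta F_0$ bound makes the constant depend on $T$ when $T>1$, which is harmless since the constants in the paper's Carleman estimate are allowed to depend on $T$.
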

\begin{proof}
This can be deduced from the estimates $(41)-(44)$ in \cite{WZ} which in turns relies on the inequalities in Lemma \ref{bdd1} above.

\end{proof}

We also need the following  real analysis lemma from \cite{EF}.  See lemma 3.3 in \cite{EF}.
\begin{lemma}\label{rl}
 Given $m>0$, $\exists C_m$ such that for all $y \ge 0$ and $0 < \epsilon < 1$, 
 \[
 y^m e^{-y} \le C_m \left [ \epsilon + \left(\log(\frac{1}{\epsilon}) \right )^m e^{-y} \right ]
 \]
\end{lemma}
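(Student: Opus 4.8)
The plan is to prove the stated inequality by a simple dichotomy on the size of $y$ relative to $\log(1/\epsilon)$, with the crucial point being that the threshold should be $2\log(1/\epsilon)$ rather than the more obvious $\log(1/\epsilon)$. Since $0<\epsilon<1$ we have $\log(1/\epsilon)>0$, so the two regions $\{\,0\le y\le 2\log(1/\epsilon)\,\}$ and $\{\,y>2\log(1/\epsilon)\,\}$ cover $[0,\infty)$, and it suffices to produce, on each region separately, a bound with a constant depending only on $m$.

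On the first region, monotonicity of $s\mapsto s^m$ (recall $m>0$) gives $y^m\le 2^m(\log(1/\epsilon))^m$, hence $y^m e^{-y}\le 2^m(\log(1/\epsilon))^m e^{-y}$, which is $2^m$ times the second term in the bracket. On the second region I would write $e^{-y}=e^{-y/2}\,e^{-y/2}$ and use the elementary bound $\sup_{s\ge 0}s^m e^{-s/2}=(2m/e)^m$ (the supremum is attained at $s=2m$ by a one-line calculus computation), so that $y^m e^{-y/2}\le(2m/e)^m$; since $y>2\log(1/\epsilon)$ forces $e^{-y/2}<e^{-\log(1/\epsilon)}=\epsilon$, this yields $y^m e^{-y}\le (2m/e)^m\,\epsilon$, which is $(2m/e)^m$ times the first term in the bracket. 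Choosing $C_m=\max\{2^m,(2m/e)^m\}$ then gives the claimed inequality in both cases, since $\epsilon\ge 0$ and $(\log(1/\epsilon))^m e^{-y}\ge 0$.

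There is no real obstacle here; the only point that requires a moment's thought is the choice of threshold. The naive cutoff at $y=\log(1/\epsilon)$ fails on the large-$y$ region, because it only leaves a surviving factor $e^{-y/2}\le\epsilon^{1/2}$, which is too weak to match the $\epsilon$ on the right-hand side. Inserting the factor $2$ in the threshold is exactly what keeps one full power $e^{-y/2}\le\epsilon$ available \emph{after} the polynomial $y^m$ has been absorbed into the other factor $e^{-y/2}$; this is the whole content of the lemma.
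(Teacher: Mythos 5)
Your argument is correct. Note first that the paper does not actually prove this lemma — it cites it verbatim from Escauriaza--Fernandez (Lemma 3.3 in \cite{EF}) — so there is no in-paper proof to compare against; your self-contained derivation fills that gap. The dichotomy you set up is the standard (and essentially the only natural) way to prove an estimate of this form: on $\{0\le y\le 2\log(1/\epsilon)\}$ the polynomial factor is absorbed into $(\log(1/\epsilon))^m$ at the cost of $2^m$, while on $\{y>2\log(1/\epsilon)\}$ splitting $e^{-y}=e^{-y/2}\cdot e^{-y/2}$ uses one half to kill $y^m$ uniformly (the supremum $\sup_{s\ge 0}s^m e^{-s/2}=(2m/e)^m$ is computed correctly) and the other half to land below $\epsilon$. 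Your remark about why the naive threshold $\log(1/\epsilon)$ is insufficient is also accurate: near $y\approx\log(1/\epsilon)$ one has $y^m e^{-y}\approx(\log(1/\epsilon))^m\,\epsilon$, which is not $O(\epsilon)$, and this borderline zone must therefore be assigned to the first regime, which is exactly what the factor $2$ (or any constant $>1$) in the cutoff achieves. The constant $C_m=\max\{2^m,(2m/e)^m\}$ is explicit and depends only on $m$, as required.
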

We now state our main Carleman estimate which is needed to prove the backward uniqueness result.
\begin{thrm}\label{ce}
Let  $u \in C_0^\infty(\R^n \times (0,T))$ be a solution of
\begin{equation}\label{o1}
\D(A(x,t) \nabla u) + u_t + h((x,t), u)  +Wu=g
\end{equation}
where $h$ satisfies the structure conditions in \eqref{str}. Then the following estimate holds with $G$ as in \eqref{g} for some universal $C>0$,
\begin{equation}\label{ce1}
 K \int ( u^2 + |\nabla u|^2 ) G dx dt + \gamma K \int \frac{HG}{t^{K+1}} dx dt \le C\left( \int He^{-2\gamma - \frac{\frac{b}{2} \left< x \right>^2 + K}{t} } dx dt + \int g^2 G dx dt\right) \end{equation}
where  $K, \gamma$ are  large enough depending  only on $n,\lambda, \Lambda, p_i,q, M, T$.
\end{thrm}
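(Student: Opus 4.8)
The strategy is to apply the Rellich-type identity of Lemma \ref{re1} to $u$ with the specific weights $G, F, F_0$ from \eqref{g}--\eqref{ff0}, and then absorb the new sublinear term $h((x,t),u)$ into the right-hand side using its structural hypotheses \eqref{str}. Concretely, I would start from \eqref{re2}, rearrange so that the ``good'' left-hand side of \eqref{l1} stands alone, and move the cross term $2\int Lu(Pu-Lu)G$ plus the boundary terms to the right. Since $u$ has compact support in $\R^n\times(0,T)$, the boundary contributions at $t=0$ and $t=T$ vanish. The key point is that $Pu = \D(A\nabla u)+u_t = g - Wu - h((x,t),u)$ by \eqref{o1}, so $Pu$ is controlled by $g$, $u$, and $h$; and $Pu - Lu = A\nabla u\cdot\nabla\log G - \frac F2 u$, which upon using the explicit form of $G$ is $-\frac{2b}{t}A x\langle x\rangle^{-?}\cdot$... — more precisely a term of size $\frac{\langle x\rangle}{t}|\nabla u|$ plus $\frac{F}{2}|u|$, both of which are absorbable into the good terms $CK\int u^2\langle x\rangle^2 t^{-3}G + CK\int |\nabla u|^2 t^{-1}G$ on the right of \eqref{l1}, provided $K$ is large. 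So the cross term $2\int Lu(Pu-Lu)G$ reduces, modulo absorbable pieces, to controlling $\int (g^2 + W^2 u^2 + h^2)G$ against the good terms.

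The genuinely new ingredient, compared with \cite{WZ}, is the sublinear term. Here I would use the sign condition $0 < s\,h((x,t),s)\le q H((x,t),s)$ together with $H((x,t),s)\ge c_0|s|^q$ for bounded $s$ (from \eqref{str1}) and the growth bound $h((x,t),s)\le C_0\sum_i|s|^{p_i-1}$. The first delicate step is that the term in $2\int Lu\cdot h\,G$ coming from the $u_t$-part of $Lu$, namely $2\int u_t\, h((x,t),u)\,G$, should be rewritten via $u_t\,h((x,t),u) = \partial_t\big(H((x,t),u)\big) - (\partial_t H)((x,t),u)$ and an integration by parts in $t$, producing $-\int H((x,t),u)\,\partial_t G\,dx\,dt$ plus a lower-order term bounded by $C_0\int H\,G$ using $|\partial_t H|\le C_0 H$. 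Since $\partial_t G$ contains the dominant favorable factor $\frac{2\gamma K}{t^{K+1}}G$ (plus terms of size $\frac{\langle x\rangle^2}{t^2}G$), this is exactly the mechanism that generates the good term $\gamma K\int \frac{HG}{t^{K+1}}$ on the left of \eqref{ce1}. Similarly, the $A\nabla u\cdot\nabla\log G$-part of $Lu$ paired with $h$ gives, after integration by parts in $x$ and use of $|\nabla_x H|\le \frac{C_0}{1+|x|}H$ and the coefficient bounds \eqref{coef}, a term $-\int H\,\D(A\nabla\log G)\,G + (\text{lower order})$; here $\D(A\nabla\log G)$ is computed explicitly from \eqref{g} and its dominant part is again $\sim \frac{b\langle x\rangle^2}{t^2}$, which combines constructively. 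This is where the remark in the introduction about ``new inequalities in such weighted spaces'' enters: one needs that the $h$-generated terms of size $\int |h|\,|u|\,(\text{weight})\,G$, bounded by $\int H\,(\text{weight})\,G$ via $|u\,h|\le q H$, are dominated by the favorable $\gamma K\int \frac{HG}{t^{K+1}}$ and $K\int (u^2+|\nabla u|^2)G$ once $\gamma, K$ are large, and that the far-field behavior ($H\ge\ve_0$ for $|u|>1$, polynomial decay otherwise) is handled using Lemma \ref{rl} to trade the exponential weight against the $e^{-2\gamma}$-type factor on the right-hand side of \eqref{ce1}.

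I would organize the proof as: (i) write down the identity \eqref{re2}, drop the boundary terms, invoke Lemma \ref{bdd2} to lower-bound the left side by $CK\big(\int u^2\langle x\rangle^2 t^{-3}G + \int |\nabla u|^2 t^{-1}G\big)$; (ii) expand $2\int Lu(Pu-Lu)G$, splitting $Lu$ into its three pieces and $Pu$ via \eqref{o1}; (iii) handle the purely linear pieces exactly as in \cite{WZ}, absorbing them for $K$ large and leaving $C\int g^2 G + C\int W^2u^2 G$ with the $Wu$ term absorbed using $\norm{W}_\infty\le M$ and $K$ large; (iv) handle the $h$-pieces by the integration-by-parts-in-$t$ and in-$x$ identities above, producing the favorable $\gamma K\int \frac{HG}{t^{K+1}}$ term and error terms of the form $C\int H\,(\langle x\rangle^2 t^{-2}+\text{const})\,G$; (v) absorb these errors into the favorable $H$-term and into $K\int u^2 G$ (using $H\le C_0\sum|u|^{p_i}$ for small $u$ and $H\ge\ve_0$ otherwise, plus Lemma \ref{rl}) at the cost of the $\int H e^{-2\gamma-(\frac b2\langle x\rangle^2+K)/t}$ term on the right; (vi) finally note that $K\int (u^2+|\nabla u|^2)G$ follows from $\int u^2\langle x\rangle^2 t^{-3}G + \int|\nabla u|^2 t^{-1}G$ since on the support (away from $t=0$) the weights $\langle x\rangle^2 t^{-3}$ and $t^{-1}$ dominate a constant — more carefully, one uses that $t\in(0,T)$ and absorbs. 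The main obstacle I anticipate is step (iv)--(v): getting the signs and the powers of $t$ and $\langle x\rangle$ to work out so that every $h$-generated error is genuinely absorbable requires the precise form of $\partial_t G$, $\D(A\nabla\log G)$, and the full set of inequalities in Lemma \ref{bdd1}, together with the new weighted inequalities; the sublinearity is what makes this possible (it is why $|u\,h|\le qH$ and $H$ has good derivative bounds), but the bookkeeping is delicate, especially near $|u|=1$ where one switches between the $|u|^q$ lower bound and the $\ve_0$ lower bound for $H$.
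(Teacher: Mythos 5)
Your proposal follows the paper's own route essentially verbatim: apply the Rellich identity of Lemma \ref{re1} with the weights \eqref{g}--\eqref{ff0} and the coercivity from Lemma \ref{bdd2}, then handle the new cross term $-2\int Lu\,h\,G$ by the chain-rule rewritings $u_t h = (H)_t - \partial_t H$ and $A\nabla u\cdot\nabla\log G\,h = A\nabla\log G\cdot\nabla H - A\nabla\log G\cdot\nabla_x H$, integrate by parts, pair the resulting $2\int H(G_t-\tilde\Delta G)$ with $-\int Fuh\,G$ so that $uh\le qH$ and $q<2$ produce the favorable $-(2-q)\gamma K\int HG\,t^{-K-1}$ term, and dispose of the residual $\int H\langle x\rangle^2 t^{-2}G$ via Lemma \ref{rl}; this is exactly what the paper does. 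Two small slips worth flagging (neither affects the plan): $Pu-Lu$ is $\D(A\nabla u)+A\nabla u\cdot\nabla\log G-\tfrac F2 u$, not just $A\nabla u\cdot\nabla\log G-\tfrac F2 u$, and the $H$-weighted errors are absorbed into the favorable $\gamma K\int HG\,t^{-K-1}$ term and the $\int He^{-2\gamma-\cdots}$ right-hand side (not into $K\int u^2 G$, since $H\sim|u|^{p_i}$ with $p_i<2$ dominates $u^2$ near the zero set).
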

\section{Proof of the main results}\label{s:main}
We first establish the Carleman estimate  as in \eqref{ce1} which is needed to prove Theorem \ref{main} i).
\begin{proof}[Proof of Theorem \ref{ce}]
Let $G, F, F_0$ be as in Section \ref{s:n}. We start by applying the identity in Lemma \ref{re1} with $u$ as in the hypothesis.  Before proceeding further, we remark that in all the subsequent integrals, the measure $dxdt$ will be omitted. By  using the equation \eqref{o1} satisfied by $u$, we note that  the  corresponding right hand side in \eqref{re2} equals
\begin{align}\label{t0}
    & 2 \int Lu(Pu - Lu) G  \\
    &= 2\int \left ( u_t - A \nabla u \cdot \nabla \log G + \frac{Fu}{2} \right ) \left ( g - Wu - h(X,u) \right ) G - 2 \int (Lu)^2 G \notag\\
    & \le 2\int g^2 G dx dt +2 M \int u^2 G    -2\int \left(u_t - A \nabla u \cdot \nabla \log G + \frac{Fu}{2}\right) h(X,u) G \notag \\
    & \text{(using $2ab \leq a^2+ b^2$ with $a= Lu$ and $b=g-Wu$ and also that $b^2 \leq 2(g^2 +  M u^2)$)}\notag 
    \end{align}
    Now  by integrating by parts,  the last integral in \eqref{t0} ( which we denote by $I_1$)  can be expressed as
    \begin{align}\label{f1}
    & I_1= - 2\int [((H(X,u))_t - \partial_t H) - A \nabla H(X,u) \cdot \nabla \log G + A \nabla_x H \cdot \nabla \log G ] G  - \int Fu\ h(X,u) G  \\
    &=2 \int H(X,u) (G_t - \tilde{\Delta} G) - \int Fu\  h(X,u) G  +2 \int ( \partial_t H - A \nabla _x H \cdot \nabla \log G ) G \notag
\end{align}    
We note that in \eqref{f1} above, we rewrote  $A\nabla u\cdot \nabla \log G\ h(X,u)$ as 
\[
A\nabla u\cdot \nabla \log G\ h(X,u)=A \nabla H(X,u) \cdot \nabla \log G - A\nabla_x H \cdot \nabla \log G
\]
Then from \eqref{g1}  and the expression of  $F$ as in \eqref{ff0}, we observe that $I_1$  can be further rewritten as
\begin{align}\label{t1}
&I_1  = 2 \int ( \partial_t H - A \nabla _x H \cdot \nabla \log G ) G + 2 \int \left( H(X,u) - \frac{uh(X,u)}{2}\right) \left( \frac{b \left< x \right>^2 - 4 b^2 a_{ij}x_ix_j + K}{t^2}\right)G \\
&+2 \int \left( H(X,u) - \frac{uh(X,u)}{2}\right) \frac{2ba^{ii}}{t}G-2\int \left(H(X,u) - \frac{uh(X,u)}{2}\right) \frac{2\gamma K}{t^{K+1}}G\notag\\
&   + 2 \int \frac{H(X,u)}{t} \partial_k a^{kl}x_l G  + \int \frac{uh(X,u)d}{t}G\notag 
\end{align}
Now using the bounds on $\nabla_x H$ and $\partial_t H$ as in \eqref{str}, the bounds for $\nabla_x a^{ij}$ as in \eqref{coef}   and also that $ u h(X, u) \leq q H(X,u)$ for some $q \in (1,2)$, we obtain from \eqref{t1} that $I_1$ can be estimated from above in the following way,
\begin{equation}\label{k1}
 I_1 \le C \int\frac{HG}{t} + C \int \frac{H G \left< x \right>^2}{t^2}  + C \int  \frac{K}{t^2} HG + (q - 2) \int \frac{HG}{t^{K+1}}(2\gamma K)
\end{equation}
where $C$ is some universal constant independent of $K$. We note that the last integral in \eqref{k1} comes from estimating the integral   $2\int \left(H(X,u) - \frac{uh(X,u)}{2}\right) \frac{2\gamma K}{t^{K+1}}G$ in \eqref{t1} in the following way,
\[
2\int \left(H(X,u) - \frac{uh(X,u)}{2}\right) \frac{2\gamma K}{t^{K+1}}G \geq (2- q) \int \frac{HG}{t^{K+1}} (2 \gamma K).
\]
We would like to mention  that this is precisely the place where we use the specific  structure of the sublinearity $h$  and as one would see, the fact that $q<2$ would be crucially exploited subsequently. 

 We then note that since $q < 2$, therefore for $\gamma$ sufficiently large ( depending on $C, T$ and $2-q$)  and for  $K>2$, we can ensure that
\begin{equation}\label{t2}
C \int\frac{HG}{t}  + C \int  \frac{K}{t^2} HG + (q - 2) \int \frac{HG}{t^{K+1}}(2\gamma K) \leq (q-2) \int \frac{HG}{t^{K+1}} \gamma K
\end{equation}
Now the remaining term 
\[
C \int \frac{H G \left< x \right>^2}{t^2}
\]
is estimated using Lemma \ref{rl} in the following way.
\begin{equation}\label{t4}
\int \frac{HG\left< x \right>^2}{t^2}  \le  C\left( \int \frac{H}{t} e^{-2\gamma - \frac{\frac{b}{2} \left< x \right>^2 + K}{t} }  + 2\gamma \int \frac{H}{t^{K+1}} G  \right)
\end{equation}
We note that \eqref{t4} follows by an application of the  inequality in Lemma \ref{rl} with  $\epsilon = e^{-2\gamma (t^{-K})}, m=1$ and $y=\frac{b}{2} \frac{\left< x \right>^2}{t}$. Using \eqref{t2} and \eqref{t4} in \eqref{k1}, we obtain for some other constant $C$ that the following holds,
\begin{equation}\label{j1}
I_1 \leq C \left( \int \frac{H}{t} e^{-2\gamma - \frac{\frac{b}{2} \left< x \right>^2 + K}{t} }  + 2\gamma \int \frac{H}{t^{K+1}} G \right) +  (q-2) \int \frac{HG}{t^{K+1}} \gamma K
\end{equation}
Consequently,  if  $K$ is chosen large enough, then we can guarantee that,
\begin{equation}\label{j2}
2 C  \gamma \int \frac{H}{t^{K+1}} G  +  (q-2) \int \frac{HG}{t^{K+1}} \gamma K \leq \left(\frac{q}{2} -1 \right) \int  \frac{HG}{t^{K+1}} \gamma K
\end{equation}
Thus by using using \eqref{j2} in \eqref{j1}, we obtain
\begin{equation}\label{j4}
I_1 \leq C  \int \frac{H}{t} e^{-2\gamma - \frac{\frac{b}{2} \left< x \right>^2 + K}{t} } + \left(\frac{q}{2} -1 \right) \int  \frac{HG}{t^{K+1}} \gamma K
\end{equation}
Also using Lemma  \ref{bdd2}, we note that the left hand side in  the corresponding identity in  Lemma \ref{re1} can be bounded from below  by 
\begin{align}\label{j5}
   C_1 K \int (u^2 + |\nabla u|^2 ) G dx
\end{align}
for some universal $C_1>0$.
Finally from \eqref{t0},  \eqref{j4} and \eqref{j5}, we observe that if $K$ is additionally chosen large enough such that
\[
C_1K \geq 4 M
\]
then the following integral  in \eqref{t0}, 
\[
2M \int u^2 G
\]
can be absorbed in the left hand side and consequently the
 desired estimate as claimed in  \eqref{ce1} follows.

\end{proof}
\begin{rmrk}
We cannot stress enough the crucial role  of the inequality \eqref{t4} and its interplay with the sublinear structure  that allows us to establish the estimate \eqref{ce1}.
\end{rmrk}
With the Carleman estimate as in \eqref{ce1} in our hands, we now proceed with the proof of Theorem \ref{main} i).
\vspace{.1in}
\begin{proof}[Proof of Theorem \ref{main} i)]
First we note that it follows from the space like strong unique continuation result  as in Theorem 1.1  in \cite{BM} that $v(\cdot, 0) \equiv 0$.  Moreover from  Step 2 in the proof of Theorem 1.1 in \cite{BM}, it also follows that $v$ vanishes to infinite order in time at $t=0$.  Furthermore from the classical regularity theory as in \cite{Li}, we have that $\nabla^2 u , u_t \in L^{p}_{loc}$ for all $p<\infty$. Thus for $t<0$,   if we  extend $v$ by $0$, the principal coefficients $a^{ij}$ by $a^{ij}(x,0)$ and $W$ by $0$, we note that the extended $v$   continues to solve an equation of the type \eqref{e0} with similar structural assumptions.  Then by using rescaling and translation of the type, 
\[
\tilde v (x,t) = v(rx, r^2 (t-1/2))
\]
for $r$ sufficiently small,   we can ensure that 
\begin{equation}\label{ty1}
|\tilde v(x,t)| \leq C e^{\frac{b}{8} |x|^2}.
\end{equation}
 Subsequently we let  $\tilde v$ be our new  $v$ and thus we are reduced to the situation where   $v\equiv 0$ for $t\leq 1/2$.  Now let $\eta$ be a function of $t$ defined as 
\begin{equation}
\begin{cases}
\eta(t) \equiv 1\ \text{for $t< 3/4$}
\\
\eta(t) \equiv 0\ \text{for $t>7/8$}
\end{cases}
\end{equation}
By  a standard limiting argument ( using cut-offs in space)  as in the proof of Lemma 2.1 in \cite{WZ}, thanks to the Tychonoff type bounds  in  \eqref{ty1}( which ensures integrability of the integrals in \eqref{ce1}  as $|x| \to \infty$ corrresponding to the weight $G$ as in \eqref{g}), the Carleman estimate  in \eqref{ce1} continues to be valid  for $u=\eta v$. Then we note that using \eqref{e0}, we have that $u$ solves \eqref{o1} with 
\[
g= v \eta_t  + h(X, u) - \eta h(X, v)
\]
From the definition of $\eta$ above, it follows that 
\begin{align}\label{bn1}
&g \equiv 0\ \text{ for $t \leq 3/4$ and }\\
& |g| \leq C  (|v| + \sum |v|^{p_i-1})\notag
\end{align}
Now by applying the  Carleman estimate \eqref{ce1} to $u$ we obtain,

\begin{equation}\label{gt1}
K \int ( (\eta v)^2 + | \nabla (\eta v)|^2 ) G dx dt \le C( \int g^2 G dx dt + \int \frac{H(X,\eta v)}{t} e^{-\frac{\frac{b}{2} \left < x \right >^2 + K }{t} -2 \gamma}  dx dt)\end{equation}
 
Now let $l  \in (1/2, 3/4)$. Then by  minorizing the integral on the left hand side in \eqref{gt1} over the region $\{\frac 12 \leq t \leq \frac 34\}$  and by using  \eqref{bn1} we deduce that the following holds,

\begin{align}\label{v1}
& K \int_{\frac 12 \le t \le \frac 34} (v^2 + |\nabla v|^2 ) G dx dt  \\  
&\leq  C \int \frac{H(x,\eta v)}{t} e^{\frac{-\frac{b}{2} \left < x \right > ^2 + K}{t} - 2\gamma}dx dt + C \int_{\frac 34 \le t \le 1} g^2 G dx dt\notag 
\end{align}
Continuing further we obtain,
\begin{align}\label{v2}
& e^{2\gamma(l^{-K}-1)} \int_{\frac 12 \le t \le l}(v^2 + |\nabla v|^2) e^{-\frac{b \left <x \right > ^2 + K}{t}} dx dt \\
& \le C\bigg( e^{-2\gamma}\int_{\frac 12 \le t \le 1}\frac{H(X, \eta v)}{t}e^{\frac{-\frac{b}{2} \left < x \right > ^2 + K}{t}} dx dt\notag \\
& + e^{2\gamma((\frac 34)^ {-K} - 1)} \int_{\frac 34 \le t \le 1} g^2 e^{-\frac{b \left < x \right >^2 + K}{t}}dx dt \bigg)\notag
 \end{align}
 Dividing both sides of \eqref{v2}   by $e^{2\gamma (l^{-K}-1)}$  we find again by using \eqref{bn1},
 \begin{align}\label{last}
&\int_{\frac 12 \le t \le l} (v^2 + |\nabla v|^2) e^{-\frac{b \left < x \right >^2 + K}{t}}dx dt\\
 & \le C e^{-2\gamma l^{-K}} \int_{\frac 12 \le t \le 1}\sum |v|^{p_i}e^{-\frac{\frac{b}{2} \left < x \right >^2 + K}{t}} dx dt\notag \\
&  + e^{2 \gamma((\frac 34)^{-K} - l^{-K})}C \int_{\frac34 \le t \le 1}  ( |v|^2 + \sum |v|^{2(p_i-1)})e^{-\frac{b \left <x \right >^2 + K}{t}}dx dt\notag
\end{align}
Now letting $\gamma \to \infty$ in \eqref{last},  we find that the right hand side goes to $0$ and thus  we can assert that   $v(\cdot, t) \equiv  0$ for $ \frac 12 \le t \le l$. Now by going back to the original $v$ by scaling back, we obtain  that $v(\cdot, t) \equiv 0$ for $0 \leq t \leq \kappa$ for some $\kappa>0$ universal. Therefore  in this way we can spread the zero set and hence conclude that $v \equiv 0$.

\end{proof}

\begin{proof}[Proof of Theorem \ref{main} ii)]
As previously mentioned in the introduction, the proof of this result is via a  geometric variational approach based on a  Weiss type monotonicity formula  which is tailor-made  for the sublinear problem.  The proof is divided into two steps. 

\medskip

\emph{Step 1: A Weiss type monotonicity formula.}
With $G = \frac{1}{|t|^{\frac n2}} e^{-\frac{|x|^2}{4t}}$(note that this $G$ is different from one in \eqref{g}), following  \cite{Po} and \cite{ST}, we let
\vspace{.1in}
\begin{align}\label{W}
&H(R) = \int_{t=R^2}v^2 G dx
\\
& I(R) = R^2 \int_{t=R^2}|\nabla v|^2 G dx- 2R^2 \int_{t=R^2}H(X,v)G dx\notag
 \\
 &  W_{\gamma} (R) = \frac{I(R)}{R^{2\gamma}} - \frac{\gamma}{2R^{2\gamma}} H(R)\notag
\end{align} 

We now make the following claim.

\emph{Claim:} For $\gamma$ sufficiently large depending also on the $L^{\infty}$ norm of $v$, we have that 
\begin{equation}\label{mon}
W_{\gamma}' (R) \geq 0\ \text{for a.e.  $R \in (0,1)$}.
\end{equation}

In order to establish the claim, we compute the derivatives of the quantities involved in the expression of $W_{\gamma}$. We first compute $H'$. In all the computations below, we will be using the Einstein's  notation of summation of repeated indices.   By integrating by parts and by using the equation \eqref{e0}, we note that
\begin{align}\label{h}
    &H'(R) = \int 2 v v_t (2R) G dx + \int v^2 G_t (2R) dx\\
     &= 4R \int v v_t G dx + 2R \int v^2 \Delta G dx\notag\\
     &= 4R \int v \left [-\Delta v - h(X,v) - Wv  \right ]G dx - 2R \int \nabla (v^2) \cdot \nabla G dx\notag \\
     & = 4 R \int \left ( \nabla (vG) \right ) \cdot \nabla v dx - 4R \int v h(X,v) G dx - 4 R \int Wv^2Gdx - 2R \int \nabla (v^2) \cdot \nabla G dx \notag\\
     & = 4R \int |\nabla v|^2 G dx - 4R \int v h(X,v) G dx - 4R \int W v^2 G dx \notag
\end{align}
Also
\vspace{.1in}
\begin{equation}
    I(R) = R^2 \int |\nabla v|^2 G dx - 2R^2 \int H(X,v) Gdx = I_1 + I_2
\end{equation}
\vspace{.1in}
Again by using \eqref{e0} and by integrating by parts, we  find for a.e. $R$,

\begin{align}\label{i1}
    &I_1'(R) = 2R \int |\nabla v|^2 G dx + R^2 \int 2 v_{i} v_{it}(2R) G dx + R^2 \int |\nabla v|^2 G_t (2R) dx \\
    &= 2R \int |\nabla v|^2 G dx + 4R^3 \int v_{i}v_{ti}G dx + 2 R^3 \int |\nabla v|^2 \Delta G dx\notag \\
    &= 2R \int |\nabla v|^2 G dx - 4 R^3 \int \left ( \Delta v G +  \nabla v \cdot \nabla G  \right)v_t dx - 2R^3 \int \nabla (|\nabla v|^2) \cdot \nabla G dx\notag \\
    & = 2R \int |\nabla v|^2 G dx - 4R^3 \int \left [ (-v_t - h(X,v) - Wv ) G  + \nabla v \cdot \nabla G\right ]v_t dx\notag \\
    & + 2R^3 \int 2 v_{i}v_{ij} \frac{x_j}{2t}G dx\ \text{(note that $\nabla G= -\frac{x}{2t} G$)} \notag\\
    &= 2R \int |\nabla v|^2 G dx - 4R^3\int \left ( (-v_t - h(X,v) - Wv ) G + \nabla v \cdot \nabla G \right )v_t dx\notag \\
    & \ \ \ + 2R   \int (v_{i} ( v_{j} x_j)\frac{x_i}{2t} - v_{i}^2 ) G dx - 2R \int \Delta v v_j x_j G dx\notag \\
   &= - 4R^3\int \left( (-v_t - h(X,v) - Wv ) G + \nabla v \cdot \nabla G \right)v_t dx \notag \\
    &  + 4R^3   \int v_{i} \frac{x_i}{2t} v_{j} \frac{x_j}{2t}     G dx - 2R \int (-v_t - h(X, v) - Wv) v_j x_j G dx \notag\\
        &= 4R^3 \int ( v_t + \nabla v \cdot \frac{x}{2t})^2 G dx + 4 R^3 \int ( h(X,v) + W v) \nabla v \cdot \frac{x}{2t} G dx\notag \\
    & + 4R^3 \int ( (h(X,v) + Wv) v_t G dx\notag\\
    &= 4R^3 \int (v_t + \nabla v \cdot \frac{x}{2t} + \frac 12 Wv)^2 G dx - R^3 \int W^2 v^2 G dx + 4 R^3 \int ( h(X,v)) ( \nabla v \cdot \frac{x}{2t} + v_t) Gdx\notag\end{align}
    We note that  in  the intermediate  computations for $I_	1'(R)$ in \eqref{i1} above,  the term $v_{it}$ appears  and the solution $v$ may not possess that much of Sobolev regularity in general.  However  such formal computations can be justified by approximating $v$ by solutions to smoother problems and a limiting argument which crucially uses  the fact that $v_t, \nabla v, \nabla^2 v  \in L^{2}_{loc}$  with universal bounds depending on the  $L^{\infty}$ norm of $v$.
\vspace{.1in}
Likewise
\vspace{.1in}
\begin{align}\label{i2}
    & I_2'(R)  = -4R \int H(X,v) G dx - 4 R^3 \int H_t (X,v) G dx - 4 R^3 \int h(X,v) v_t G dx - 4 R^3 \int H(X,v) G_t dx \\
    & = -4R \int H(X,v) G dx - 4 R^3 \int H_t (X,v) G dx - 4 R^3 \int h(X,v) v_t G dx - 4R^3 \int h(X, v) \nabla v \cdot \frac{x}{2t} G dx \notag\\
    & - 4R^3 \int \nabla_x H \cdot \frac{x}{2t} G dx\notag \end{align}
Thus from \eqref{i1} and \eqref{i2} we observe  that the integral  $4 R^3 \int_{t=R^2}  ( h(X,v)) ( \nabla v \cdot \frac{x}{2t} + v_t) Gdx$ gets cancelled  in the expression of $I'(R)$.  Therefore using \eqref{h}, \eqref{i1} and \eqref{i2} we obtain, 
\begin{align}\label{w1}
   &  W_{\gamma}'(R)  = \frac{I'(R)}{R^{2\gamma}} - \frac{2\gamma I(R)}{R^{2\gamma+1}} - \frac{\gamma}{2R^{2\gamma}} H'(R) + \frac{\gamma^2}{R^{2\gamma+1}} H(R) \\
    &= \frac{4R^3}{R^{2\gamma}} \int_{t=R^2} (v_t + \nabla v \cdot \frac{x}{2t} + \frac 12 Wv)^2 G dx - \frac{R^3}{R^{2\gamma}} \int_{t = R^2}W^2 v^2 G dx - \frac{4R}{R^{2\gamma}} \int _{t = R^2} H(X,v) Gdx\notag \\ 
    & \ \ \ \ - \frac{4R^3}{R^{2\gamma}} \int_{t=R^2} H_t G dx - \frac
    {4R^3}{R^{2\gamma}}\int_{t= R^2} \nabla _x H \cdot \frac{x}{2t} G dx - \frac{2\gamma R^2 }{R^{2\gamma+1}} \int_{t=R^2} |\nabla v|^2 G dx\notag  \\ 
    & \ \ \ \ + \frac{4\gamma R^2}{R^{2\gamma+1}} \int_{t= R^2} H(X,v) G dx - \frac{4\gamma R}{2R^{2\gamma }} \int_{t=R^2}|\nabla v|^2 G dx + \frac{4\gamma R}{2R^{2\gamma }} \int_{t=R^2} vh(X,v) G dx\notag \\
    & \ \ \ \ + \frac{4\gamma R}{2R^{2\gamma}} \int_{t=R^2} Wv^2 Gdx + \frac{\gamma^2}{R^{2\gamma+1}}\int_{t=R^2}v^2 G dx\notag
    \end{align} 
    
    Now again from \eqref{e0} and divergence theorem, we note that the integral $\int_{t=R^2} |\nabla v|^2 G dx$ can be rewritten as 
    \begin{equation}\label{int1}
    \int_{t=R^2} |\nabla v|^2 G dx=  \int_{t=R^2} ( v_t  + Wv + h(X, v) + \nabla v \cdot \frac{x}{2t} ) v G dx
    \end{equation} 
     Using \eqref{int1} in \eqref{w1}, we find
    \begin{align*}
    & W_{\gamma}'(R) = \frac{1}{R^{2\gamma-1}}\int_{t=R^2} \bigg[[(2R)(v_t + \nabla v \cdot \frac{x}{2t} + \frac 12 Wv)]^2 + (\frac{\gamma v}{R})^2 \bigg] G dx \\
    & \ \ \ \ + \frac{4\gamma }{R^{2\gamma-1}} \int_{t=R^2} ( -v_t - Wv - h(X,v) - \nabla v \cdot \frac{x}{2t}) vG dx\\
    & - \frac{R^3}{R^{2\gamma}} \int_{t = R^2}W^2 v^2 G dx - \frac{4R}{R^{2\gamma}} \int _{t = R^2} H(X,v) Gdx \\ 
    & \ \ \ \ - \frac{4R^3}{R^{2\gamma}} \int_{t=R^2} H_t G dx - \frac
    {4R^3}{R^{2\gamma}}\int_{t= R^2} \nabla _x H \cdot \frac{x}{2t} G dx   \\ 
    & \ \ \ \ + \frac{4\gamma R^2}{R^{2\gamma+1}} \int_{t= R^2} H(X,v) G dx + \frac{4\gamma R}{2R^{2\gamma}} \int_{t=R^2} vh(X,v) G dx \\
    & \ \ \ \ + \frac{4\gamma R}{2R^{2\gamma}} \int_{t=R^2} Wv^2 Gdx \\
    &= \frac{1}{R^{2\gamma-1}} \int _{t = R^2} \bigg[ (2R) (v_t + \nabla v \cdot \frac{x}{2t} + \frac 12 Wv ) - \frac{\gamma v}{R}\bigg]^2 G dx - \frac{R^3}{R^{2\gamma}} \int_{t = R^2} W^2 v^2 G dx \\ 
    & \ \ \ \ - \frac{4R}{R^{2\gamma}} \int _{t = R^2} H(X,v) Gdx \\ 
    & \ \ \ \ - \frac{4R^3}{R^{2\gamma}} \int_{t=R^2} H_t G dx - \frac
    {4R^3}{R^{2\gamma}}\int_{t= R^2} \nabla _x H \cdot \frac{x}{2t} G dx   \\ 
    & \ \ \ \ + \frac{4\gamma R^2}{R^{2\gamma+1}} \int_{t= R^2} H(X,v) G dx - \frac{2\gamma }{R^{2\gamma-1}} \int_{t=R^2} vh(X,v) G dx
    \end{align*}
    Now using the bounds for $H_t$ and $\nabla_x H$ as in \eqref{str}, it follows that for $\gamma$ sufficiently large depending on the bounds in \eqref{str} and \eqref{w}, we have that for some universal $C>0$, 
    \begin{align}\label{w40}
&W_{\gamma}'(R) \ge - \frac{C}{R^{2\gamma -1}} \int_{t=R^2} v^2G dx  + \frac{4\gamma}{R^{2\gamma-1}} \int_{t= R^2} H(X,v) G dx - \frac{2\gamma }{R^{2\gamma-1}} \int_{t=R^2} vh(X,v) G dx \\
&- \frac{C}{R^{2\gamma -1 }} \int_{t=R^2}H(X,v)G dx\notag\end{align}
Then by using $vh(X, v) \leq  q H(X, v)$, we obtain from \eqref{w40} that the following holds,
  \begin{align}\label{w4}
&W_{\gamma}'(R) \ge - \frac{C}{R^{2\gamma -1}} \int_{t=R^2} v^2G dx  + \frac{1}{R^{2\gamma-1}} \int_{t= R^2} (2 \gamma (2-q)  - C ) H(X,v) G dx \end{align} Now since $ q<2$. therefore by choosing $\gamma$ large enough, we can ensure that
\begin{align}\label{w5}
& W_{\gamma}'(R) \geq  \frac{1}{R^{2\gamma-1} }\int_{t=R^2}( C_1 \gamma  H(x,v)  - C v^2) G dx
\end{align}
At this point, we use the fact that since $v$ is bounded, therefore from \eqref{str1} it follows that   
\begin{equation}\label{cr}
H(x, v) \geq  c_0 |v|^q
\end{equation}
where $c_0$ depends on the $L^{\infty}$ norm of $v$ (this is precisely where we use the boundedness of $v$).

Using \eqref{cr} in  \eqref{w5}, we deduce  that the following holds for a new constant $C_2$ depending also on $c_0$, 
\begin{align}\label{w7}
& W_{\gamma}'(R) \geq  \frac{1}{R^{2\gamma-1} }\int_{t=R^2} |v|^q ( C_2 \gamma    - C |v|^{2-q}) G dx
\end{align}
Now since $v$ is bounded, therefore by choosing $\gamma$ sufficiently large depending also on the $L^{\infty}$ norm of $v$, we can ensure that 
\begin{equation}\label{boundedness}
C_2 \gamma - C|v|^{2-q} \geq 0
\end{equation}
and thus we can assert that the Weiss type monotonicity  as  claimed in  \eqref{mon} holds.

\emph{Step 2: Conclusion.}
Assume on the contrary that $v$ is not identically zero in $\Rn \times (0,1 )$. Then there exists $R>0$ such that $v(\cdot, R^2) \neq  0$. We now  choose  $\gamma >0$ large enough such that  simultaneously both \eqref{mon} as well as 
 \[W_{\gamma}(R) < 0\]  hold.  Then from the  monotonicity of $W_{\gamma}$, we must have that $W_{\gamma}(0+) < 0$. However since $v$ vanishes to infinite order at $(0,0)$ in the sense of \eqref{vp1}, it follows from the expression of $W_{\gamma}$ as in \eqref{W}   that $W_{\gamma}(0+) \geq 0$ (see for instance Lemma 1.4 in \cite{Po}). This leads to a contradiction and thus finishes the proof of the Theorem.
\end{proof}


\begin{thebibliography}{99}

\bibitem{Al}
F. J. Almgren, Jr., \emph{Dirichlet's problem for multiple valued functions and the regularity of mass minimizing integral currents. Minimal submanifolds and geodesics}, (Proc. Japan-United States Sem., Tokyo, 1977), pp. 1-6, North-Holland, Amsterdam-New York, 1979.

\bibitem{AKS}
Aronszajn, N. , Krzywicki, A \& Szarski, J. \emph{A unique continuation theorem for exterior differential forms on Riemannian manifolds},Ark. Mat. \textbf{4}~ 1962 417-453 (1962).


\bibitem{BGM}
A. Banerjee, N. Garofalo \& R. Manna, \emph{Carleman estimates for Baouendi-Grushin operators with applications to quantitative uniqueness and strong unique continuation}, 	arXiv:1903.08382, to appear in Applicable Analysis.
\bibitem{BM}
A. Banerjee \& R. Manna, \emph{Space like strong unique continuation for sublinear parabolic equations}, arXiv:1812.10246, to appear in Journal of London Mathematical Society.


\bibitem{Car}
T. Carleman, \emph{Sur un probleme d'unicite pur les systemes d'equations aux derivees partielles a deux variables independantes}, Ark. Mat., Astr. Fys. \textbf{26}~ (1939). no. 17, 9 pp 


\bibitem{Ch}
X. Chen, \emph{A strong unique continuation theorem for parabolic equations}, Math. Ann., \text{311}~(1998), 603-630. 





\bibitem{EF}
L. Escauriaza \& F. Fernandez, \emph{Unique continuation for parabolic operators. (English summary)}, Ark. Mat. \textbf{41}~ (2003), no. 1, 35-60.

\bibitem{EFV}
L. Escauriaza, F. Fernandez \& S. Vessella, \emph{Doubling properties of caloric functions}, Appl. Anal. \textbf{85}~ (2006), no. 1-3, 205-223.




\bibitem{GL}
N. Garofalo \& F. Lin, \emph{Monotonicity properties of variational integrals, $A_p$ weights and unique continuation}, Indiana Univ. Math. J. \textbf{35}~(1986),  245-268.


\bibitem{Ho}
L. H\"ormander, \emph{Uniqueness theorems for second order elliptic differential equations}. Comm. Partial Differential Equations \textbf{8}~(1983), no. 1, 21-64. 

\bibitem{J}
F. Jones, \emph{A fundamental solution for the heat equation which is supported in a strip},  J. Math. Anal. Appl. \textbf{60}~ (1977) 

 \bibitem{JK}
  D. Jerison \& C. Kenig, \emph{Unique continuation and absence of positive eigenvalues for Schrodinger operators}, Ann. of Math. (2) 121 (1985), no. 3, 463-494.  
  







 \bibitem{KT0}
  H. Koch \& D. Tataru, \emph{Carleman estimates and unique continuation for second-order elliptic equations with nonsmooth coefficients}, Comm. Pure Appl. Math. 54 (2001), no. 3, 339-360. 

\bibitem{KT}
H. Koch \& D. Tataru, \emph{Carleman estimates and unique continuation for second order parabolic equations with nonsmooth coefficients  (English summary) }, 
Comm. Partial Differential Equations \textbf{34}~ (2009), no. 4-6, 305-366. 


\bibitem{L}
F. Lin, \emph{A uniqueness theorem for parabolic equations}, Comm. Pure Appl. Math., \textbf{43}~(1990), 127-136. 

\bibitem{Li}
G. Lieberman, \emph{Second order parabolic differential equations},  World Scientific Publishing Co., Inc., River Edge, NJ, 1996. xii+439 pp. ISBN: 981-02-2883-X.

\bibitem{LM}
J. Lions \& B. Malgrange, \emph{Sur lunicite retrograde dans les problemes mixtes paraboliques}. (French)
Math. Scand. \textbf{8}~ (1960), 277?286.

\bibitem{LO}
E. M. Landis \& O. A. Oleinik, \emph{ Generalized analyticity and some related properties of solutions of elliptic and parabolic equations}, Russ. Math. Surv. \textbf{29}~(1974), 195-212. 


\bibitem{Mi}
S. Mizohata, \emph{Unicite du prolongement des solutions pour quelques operateurs differentiels paraboliques}, Mem. Coll. Sci. Univ. Kyoto. Ser. A. Math., \textbf{31}~219-239. 


  
  \bibitem{Po}
C. C. Poon, \emph{Unique continuation for parabolic equations}, Comm. Partial Differential Equations \textbf{21}~ (1996), no. 3-4, 521-539.  

\bibitem{PW}
E. Parini \& T. Weth, \emph{ Existence, unique continuation and symmetry of least energy nodal solutions to sublinear Neumann problems}, Math. Z, \textbf{280}~(2015), 707-732.
\bibitem{Ru}
A. Ruland, 
\emph{Unique Continuation for Sublinear Elliptic Equations Based on Carleman Estimates}, 	J. Differential Equations \textbf{265}~ (2018) 6009-6035.

\bibitem{So}
C. Sogge, \emph{A unique continuation theorem for second order parabolic differential operators}, Ark. Mat. \text{28}~(1990), 159-182. 


\bibitem{ST}
N. Soave \& S. Terracini, \emph{The nodal set of solutions to some elliptic problems: sublinear equations, and unstable two-phase membrane problem}, 	arXiv:1802.02089 
\bibitem{SW}
N. Soave \& T. Weth, \emph{The unique continuation property of sublinear equations},  SIAM J. Math. Anal. \textbf{50}(4)~ (2018), 3919-3938.

\bibitem{Vaz}
J. Vazquez, \emph{The porous medium equation: mathematical theory}, Oxford University press, 2007. 
\bibitem{V}
S. Vessella, \emph{Unique continuation properties and quantitative estimates of unique continuation for parabolic equations}, Handbook of differential equations: evolutionary equations. Vol. V, 421-500. Handb. Differ. Equ., Elsevier/North-Holland, 2009.
\bibitem{We}
G. Weiss, \emph{A homogeneity improvement approach to the obstacle problem}, Invent.
Math. \textbf{138}~ (1999), no. 1, 23-50.

\bibitem{WZ}
J. Wu \& L. Zhang, \emph{Backward uniqueness for general parabolic operators in the whole space}, (English summary)
Calc. Var. Partial Differential Equations \textbf{58}~ (2019), no. 4, 19pp.

\bibitem{WZ1}
\bysame, \emph{Backward uniqueness of parabolic equations with variable coefficients in a half space}, 	Commun. Contemp. Math. \textbf{18}~ (2016), no. 1, 38 pp.


\bibitem{Y}
H. Yamabe, \emph{A unique continuation theorem of a diffusion equation}, Ann. of Math.(2)  \textbf{69}~(1959), 462-466. 


\end{thebibliography}
\end{document}